% In-Reply-To: <20130625165828.64591z93sp9gfv9w@www.renyi.hu>

\documentclass[reqno]{amsart}
\usepackage{verbatim}
\usepackage{amssymb}
\usepackage{enumerate}
\usepackage[active]{srcltx}
\usepackage[T1]{fontenc}%{t1enc}

\numberwithin{equation}{section}

\newtheorem{theorem}{Theorem}[section]
\newtheorem{proposition}[theorem]{Proposition}
\newtheorem{lemma}[theorem]{Lemma}
\newtheorem{corollary}[theorem]{Corollary}

\newtheorem{problem}{Problem}

\theoremstyle{definition}

\newtheorem{definition}[theorem]{Definition}

\theoremstyle{remark}

\def\myheads#1;#2;{
\pagestyle{myheadings}
\markboth{{\sc\hfill #1\hfill\protect\makebox[0cm][r]{\rm\today}}}
{{\sc\protect\makebox[0cm][l]{\rm\today}\hfill #2\hfill}}
}

\newif\ifdeveloping

%\developingtrue%Header with date

\ifdeveloping
\usepackage[notref,notcite]{showkeys}
\fi

\newif\ifcommented
%\commentedtrue

\newcommand{\comm}[1]{}

\ifcommented
\renewcommand{\comm}[1]{
\fbox{\fbox{\begin{minipage}{300pt}#1\end{minipage}}
}}

\fi

\newcommand{\setm}{\setminus}

\newcommand{\subs}{\subset}
\newcommand{\dom}{\operatorname{dom}}

\def\<{\left\langle}
\def\>{\right\rangle}
\def\br#1;#2;{\bigl[ {#1} \bigr]^ {#2} }
\newcommand{\cf}{\operatorname{cf}}

\newcommand{\pr}{\operatorname{Pr}}
\newcommand{\Col}{\operatorname{Col}}

\newcommand{\supp}{\operatorname{supp}}

\author[I. Juhász]{István Juhász}
\address
      {Alfréd Rényi Institute of Mathematics, Hungarian Academy of Sciences}
\email{juhasz@renyi.hu}

\author[S. Shelah]{Saharon Shelah}
\address{Einstein Institute of Mathematics, The Hebrew University of Jerusalem}
\email{shelah@math.huji.ac.il}

\title[discretely untouchable points]{Strong colorings yield $\kappa$-bounded spaces with discretely untouchable points}

\thanks{The first author was partially supported by OTKA grant no. K 83726
and thanks the Mathematics Institute of the Hebrew University for
their hospitality during his visit there in May, 2013. The second
author would like to thank the Israel Science Foundation for
partial support of this research, Grant no. 1053/ll. Publication
1025 on the author's list}

\subjclass{54A35, 03E35, 54A25}
\keywords{Strong colorings, discretely untouchable points, $\kappa$-bounded spaces}
\date{\today}

\begin{document}

%\maketitle

\begin{abstract}
It is well-known that every non-isolated point in a compact
Hausdorff space is the accumulation point of a discrete subset.
Answering a question raised by Z. Szentmikl\'ossy and the first
author, we show that this statement fails for countably compact
regular spaces, and even for $\omega$-bounded regular spaces. In
fact, there are $\kappa$-bounded counterexamples for every
infinite cardinal $\kappa$. The proof makes essential use of the
so-called {\em strong colorings} that were invented by the second
author.
\end{abstract}

\maketitle

\section{Introduction}\label{intro}

It is part of topology folklore that a topological space is
compact iff any discrete subset in it has compact closure. Since
compact subsets of Hausdorff spaces are closed, it follows that
every non-isolated point in a compact Hausdorff space is the
accumulation point of a discrete subset, or in other words, each
such point is "discretely touchable".

Motivated by this fact, Z. Szentmikl\'ossy and the first author
raised the natural question whether this property of non-isolated
points in compact Hausdorff spaces remains valid after relaxing
compactness to a weaker property, like countable compactness or
Lindel\"ofness. The aim of this note is to give a negative answer
to this question and in fact show that no essential relaxation of
compactness suffices to preserve the above statement.

Before turning to the proof of this, we present a few preliminary
results concerning discretely touchable points. First of all we
note that any accumulation point of a right separated (or
equivalently: scattered) subspace is discretely touchable. Indeed,
this follows from the fact that the set of isolated points of any
right separated space, which is clearly discrete, is dense in that
space. This simple observation yields us the following
proposition.

\begin{proposition}\label{pr:lim}
If $X$ is a Hausdorff space, $\kappa$ is an infinite cardinal, and
the point $x$ is the limit of a one-to-one $\kappa$-sequence in
$X$ then $x$ is discretely touchable in $X$.
\end{proposition}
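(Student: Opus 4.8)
The plan is to reduce to the observation recorded just before the statement: it suffices to exhibit a right separated subspace $S\subseteq X$ having $x$ as an accumulation point, for then $x$ is discretely touchable. Write the given sequence as $\<x_\alpha : \alpha<\kappa\>$; since it is one-to-one, at most one term equals $x$, so after discarding it we may assume $x_\alpha\neq x$ for every $\alpha$. Recall that convergence of the sequence to $x$ means that for each open $U\ni x$ the set $\{\alpha : x_\alpha\notin U\}$ is bounded below $\kappa$, i.e. there is $\beta<\kappa$ with $x_\gamma\in U$ for all $\gamma\geq\beta$.

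To build $S$ I would construct, by recursion on $\xi<\cf(\kappa)$, a strictly increasing sequence of indices $\alpha_\xi<\kappa$ together with disjoint open sets $V_\xi\ni x_{\alpha_\xi}$ and $U_\xi\ni x$ (available by Hausdorffness, since $x_{\alpha_\xi}\neq x$). Having chosen $\alpha_\xi$, convergence provides a bound $\beta_\xi<\kappa$ with $x_\gamma\in U_\xi$ for all $\gamma\geq\beta_\xi$. Fixing in advance a strictly increasing cofinal sequence $\<c_\xi : \xi<\cf(\kappa)\>$ in $\kappa$, I would require at stage $\xi$ that $\alpha_\xi$ exceed $c_\xi$ as well as all of the previously produced $\alpha_\eta$ and $\beta_\eta$ for $\eta<\xi$. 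This is possible precisely because $\xi<\cf(\kappa)$ guarantees that the supremum of the fewer than $\cf(\kappa)$ many ordinals $\{c_\xi\}\cup\{\alpha_\eta,\beta_\eta : \eta<\xi\}$ stays strictly below $\kappa$.

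Then $S=\{x_{\alpha_\xi} : \xi<\cf(\kappa)\}$ is as required. It is right separated in the well order inherited from the indices: for $\eta>\xi$ we arranged $\alpha_\eta>\beta_\xi$, hence $x_{\alpha_\eta}\in U_\xi$, and so $x_{\alpha_\eta}\notin V_\xi$; thus $V_\xi$ is a neighborhood of $x_{\alpha_\xi}$ omitting every strictly later point of $S$. Moreover $x$ is an accumulation point of $S$: given any open $U\ni x$, convergence puts a tail $\{x_\gamma : \gamma\geq\beta\}$ inside $U$, and since the $\alpha_\xi$ dominate the cofinal sequence $\<c_\xi\>$ they are themselves cofinal in $\kappa$, so some $x_{\alpha_\xi}\in U$; as every point of $S$ differs from $x$, this shows $x\in\overline{S}\setminus S$. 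Invoking the preceding observation then finishes the argument.

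The one genuine obstacle is that, for singular $\kappa$, one cannot run the recursion to length $\kappa$ while keeping the threshold suprema below $\kappa$; this is exactly why I index the construction by $\cf(\kappa)$ and drive cofinality with the auxiliary sequence $\<c_\xi\>$. A second, more conceptual point is that one should only aim for right separation rather than outright discreteness of $S$: in a general Hausdorff space a neighborhood of $x_{\alpha_\xi}$ need not avoid the (possibly infinitely many) earlier chosen points, so it is the earlier remark, passing to the dense discrete set of isolated points of a right separated space, that does the real work.
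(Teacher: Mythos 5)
Your proof is correct and follows essentially the same route as the paper's: both extract, by a transfinite recursion driven by convergence, a cofinal right separated subsequence accumulating to $x$, and then invoke the observation preceding the proposition about accumulation points of right separated sets. The only cosmetic differences are that you index the recursion by $\cf(\kappa)$ where the paper first reduces to the case of regular $\kappa$, and that you separate each chosen point from $x$ by a disjoint open pair where the paper arranges that the tails of the subsequence be relatively closed.
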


\begin{proof}
Clearly, we may assume that $\kappa$ is regular, hence if $s =
\langle x_\alpha : \alpha < \kappa \rangle$ is the one-to-one
$\kappa$-sequence converging to $x$ then any intersection of fewer
than $\kappa$ many neighbourhoods of $x$ contains a tail of the
sequence $s$. As $X$ is Hausdorff, the sigleton $\{x\}$ is the
intersection of the closed neighbourhoods of the point $x$, hence
by a straight-forward transfinite induction we may select a
cofinal subsequence $\langle x_{\alpha_\nu} : \nu < \kappa
\rangle$ of $s$ such that $\{ x_\alpha : \alpha \ge \alpha_\nu \}$
is closed in $\{ x_\alpha : \alpha < \kappa \}$ for each $\nu <
\kappa$. But then $\{ x_{\alpha_\nu} : \nu < \kappa \}$ is clearly
a right separated subset of $X$ that accumulates (even converges)
to the point $x$.

\end{proof}

\begin{corollary}\label{co:chi=psi}
If we have $\chi(p,X)= \psi(p,X) \ge \omega$ for the point $p$ in
the Hausdorff space $X$ then $p$ is discretely touchable in $X$.
\end{corollary}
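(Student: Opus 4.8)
The plan is to reduce the statement to Proposition \ref{pr:lim} by producing a one-to-one $\kappa$-sequence converging to $p$, where $\kappa = \chi(p,X) = \psi(p,X) \ge \omega$. Note first that $\psi(p,X) \ge \omega$ forces $p$ to be non-isolated, since in a Hausdorff (hence $T_1$) space a point cut out by finitely many open neighbourhoods would be isolated; so a nontrivial convergent sequence accumulating at $p$ is exactly what we should be looking for. Since $\chi(p,X) = \kappa$, I would fix a neighbourhood base $\{V_\eta : \eta < \kappa\}$ at $p$ and, for $\xi < \kappa$, set $W_\xi = \bigcap_{\eta \le \xi} V_\eta$, an open neighbourhood of $p$.

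I would then build $\langle x_\xi : \xi < \kappa \rangle$ by transfinite recursion so that at each step $x_\xi \in W_\xi$, $x_\xi \ne p$, and $x_\xi \notin \{x_\eta : \eta < \xi\}$. The heart of the argument is that both requirements can be met at once using $\psi(p,X) = \kappa$. At stage $\xi$ consider the family of open neighbourhoods of $p$ consisting of $\{V_\eta : \eta \le \xi\}$ together with $\{X \setminus \{x_\eta\} : \eta < \xi\}$; the latter sets are open because $X$ is $T_1$ and contain $p$ because each $x_\eta \ne p$. This family has fewer than $\kappa$ members, and its intersection is precisely $W_\xi \setminus \{x_\eta : \eta < \xi\}$, which still contains $p$. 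Since $\psi(p,X) = \kappa$, no family of fewer than $\kappa$ open sets can have intersection $\{p\}$, so this intersection properly contains $\{p\}$ and I may pick $x_\xi$ in it different from $p$.

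It then remains to verify that the resulting sequence is as required. Injectivity is immediate from the clause $x_\xi \notin \{x_\eta : \eta < \xi\}$. For convergence, given any neighbourhood $U$ of $p$ choose $\eta_0 < \kappa$ with $V_{\eta_0} \subseteq U$; then $x_\xi \in W_\xi \subseteq V_{\eta_0} \subseteq U$ for every $\xi \ge \eta_0$, so a tail of the sequence lies in $U$. Thus $\langle x_\xi : \xi < \kappa \rangle$ is a one-to-one $\kappa$-sequence converging to $p$, and Proposition \ref{pr:lim} yields that $p$ is discretely touchable.

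I would flag the tension between injectivity and convergence as the only genuine obstacle: the naive choice $x_\xi \in W_\xi \setminus \{p\}$ secures convergence but may repeat values, so the whole content of the argument is the observation that adjoining the open sets $X \setminus \{x_\eta\}$ to the family lets pseudocharacter rule out the bad case. A pleasant byproduct worth recording is that the bound ``fewer than $\kappa$ members'' holds at every stage $\xi < \kappa$ regardless of whether $\kappa$ is regular, since $|\xi| < \kappa$ and a sum of two cardinals below $\kappa$ stays below $\kappa$; hence, unlike in the proof of Proposition \ref{pr:lim}, no reduction to a regular cardinal is needed here.
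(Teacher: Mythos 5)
Your argument is correct and is exactly the reduction the paper makes: its proof of Corollary \ref{co:chi=psi} simply asserts that $\chi(p,X)=\psi(p,X)=\kappa\ge\omega$ yields a one-to-one $\kappa$-sequence converging to $p$ and then invokes Proposition \ref{pr:lim}, and you have supplied the routine recursion behind that assertion. One cosmetic slip: for infinite $\xi$ the set $W_\xi$ need not be open, but you never actually use its openness --- all that matters is that $\{V_\eta:\eta\le\xi\}\cup\{X\setminus\{x_\eta\}:\eta<\xi\}$ is a family of fewer than $\kappa$ open neighbourhoods of $p$, which is precisely what your appeal to $\psi(p,X)=\kappa$ requires.
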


\begin{proof}
It is straight-forward to show that if $\chi(p,X)= \psi(p,X) =
\kappa \ge \omega$ then there is one-to-one $\kappa$-sequence in
$X$ converging to $p$.

\end{proof}

Since $\chi(p,X)= \psi(p,X)$ for each point $p$ of a compact
Hausdorff space $X$, corollary \ref{co:chi=psi} yields an
alternative way of showing our starting point which was the fact
that every non-isolated point in a compact Hausdorff space is is
discretely touchable.

This also leads us to the following result that perhaps explains
why it seems to be non-trivial to find a discretely untouchable
non-isolated point in a regular countably compact space.

\begin{corollary}\label{co:w_2}
If $x$ is a discretely untouchable non-isolated point in a regular
countably compact space $X$ then we have $$\omega < \psi(x,X) <
\chi(x,X)\,.$$ In particular, then $\chi(x,X) \ge \omega_2\,$.
\end{corollary}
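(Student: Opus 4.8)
The plan is to split the double inequality into the two separate claims $\psi(x,X) > \omega$ and $\psi(x,X) < \chi(x,X)$, and then to read off the final bound $\chi(x,X) \ge \omega_2$ as cardinal arithmetic: once $\psi(x,X) \ge \omega_1$ and $\chi(x,X) > \psi(x,X)$ are known, we get $\chi(x,X) \ge \psi(x,X)^+ \ge \omega_2$. The second claim is the easy one. Since $\psi(x,X) \le \chi(x,X)$ always holds, it suffices to rule out equality; and if $\chi(x,X) = \psi(x,X)$ held, this common value would be infinite, because a non-isolated point has infinite pseudocharacter (otherwise $\{x\}$ would be a finite intersection of open sets, hence open). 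Then Corollary \ref{co:chi=psi} would make $x$ discretely touchable, contradicting the hypothesis.

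The heart of the matter is the first claim, $\psi(x,X) > \omega$, and here I would argue by contradiction, using countable compactness together with regularity. Assume $\psi(x,X) = \omega$ and fix open sets $U_n$ with $\bigcap_{n<\omega} U_n = \{x\}$; intersecting initial segments, we may take them decreasing. Using regularity I would shrink each $U_n$ to an open $V_n$ with $x \in V_n \subseteq \overline{V_n} \subseteq U_n$, again arranging $V_{n+1} \subseteq V_n$, so that $\bigcap_n \overline{V_n} = \{x\}$ while each $\overline{V_n}$ is closed. Since $x$ is non-isolated, each $V_n \setminus \{x\}$ is nonempty, so I would choose $x_n \in V_n \setminus \{x\}$ and set $A = \{x_n : n < \omega\}$.

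The key observation is that $A$ is an infinite set whose only possible accumulation point is $x$. It is infinite because if some $y \ne x$ occurred cofinally then, by the nesting $V_{n+1} \subseteq V_n$, we would get $y \in \bigcap_n \overline{V_n} = \{x\}$, a contradiction; after passing to a subsequence I may assume the $x_n$ are distinct. For the accumulation points, note that for every $n$ the tail $\{x_m : m \ge n\}$ lies in the closed set $\overline{V_n}$, so any accumulation point of $A$ lies in $\bigcap_n \overline{V_n} = \{x\}$. Now countable compactness enters: the infinite set $A$ must have an accumulation point, and by the previous sentence that point is $x$. Consequently $A$ is a discrete subspace, since its unique accumulation point $x$ does not belong to $A$, and it accumulates to $x$; thus $x$ is discretely touchable, contradicting the hypothesis. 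This contradiction yields $\psi(x,X) > \omega$ and completes the argument.

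I expect the main obstacle to be precisely this construction: one must exploit regularity to pass from a countable pseudobase $\{U_n\}$ to a nested family of closed neighbourhoods whose intersection is still $\{x\}$, and then combine this with countable compactness so that the accumulation point it guarantees is forced to be $x$ while the witnessing set $A$ is genuinely discrete. The two hypotheses are used in tandem, and checking that $A$ is both infinite and discrete with $x$ as its sole accumulation point is where the care lies; the remaining inequalities are then formal.
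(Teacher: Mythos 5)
Your proof is correct, and it follows the paper's overall decomposition: establish $\psi(x,X)>\omega$ and $\psi(x,X)<\chi(x,X)$ separately, each by showing that failure would make $x$ discretely touchable, and then read off $\chi(x,X)\ge\psi(x,X)^+\ge\omega_2$. Your treatment of $\psi<\chi$ via Corollary \ref{co:chi=psi} is exactly the paper's. Where you diverge is in the key step $\psi(x,X)>\omega$: the paper simply cites the well-known fact that in a regular countably compact space $\psi(p,X)=\omega$ implies $\chi(p,X)=\omega$, and then lets Corollary \ref{co:chi=psi} do the work (via a convergent sequence). You instead give a direct, self-contained construction: shrink a countable pseudobase to a nested family of closed neighbourhoods $\overline{V_n}$ with $\bigcap_n\overline{V_n}=\{x\}$ using regularity, pick $x_n\in V_n\setminus\{x\}$, and use countable compactness to force the resulting infinite set to have $x$ as its unique accumulation point, hence to be a discrete set touching $x$. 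This buys a more elementary argument that never needs the character computation or the machinery of Proposition \ref{pr:lim}; the paper's version is shorter but rests on an uncited standard fact. One small point to keep in mind: your claim that every accumulation point of $A$ lies in each $\overline{V_n}$ uses that accumulation points of $A$ are accumulation points of every tail, which requires the $T_1$ separation implicit in the paper's use of ``regular''; with that understood, the argument is complete.
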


\begin{proof}
Indeed, it is well-known that if $p \in X$ and $\psi(p,X) =
\omega$ in a regular countably compact space $X$ then we also have
$\chi(p,X) = \omega$.

\end{proof}

A completely similar argument as above, using the fact that any
point $x$ in an initially $\kappa$-compact regular space $X$ with
pseudo-character $\psi(x,X) \le \kappa$ satisfies $\chi(x,X) =
\psi(x,X),\,$ yields the following more general result.

\begin{proposition}\label{pr:init}
If $x$ is a discretely untouchable non-isolated point in a regular
initially $\kappa$-compact space $X$ then we have $$\kappa <
\psi(x,X) < \chi(x,X)\,.$$ In particular, then $\chi(x,X) \ge
\kappa^{++}\,$.
\end{proposition}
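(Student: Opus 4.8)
The plan is to mirror the structure used for Corollary \ref{co:w_2}, replacing "countably compact" by "initially $\kappa$-compact" and the role of $\omega$ by $\kappa$. Suppose, toward a contradiction, that $x$ is a discretely untouchable non-isolated point in the regular initially $\kappa$-compact space $X$. The key structural input is the one quoted in the preamble to the statement: in a regular initially $\kappa$-compact space, any point $x$ with $\psi(x,X)\le\kappa$ automatically satisfies $\chi(x,X)=\psi(x,X)$. I would take this as given.

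First I would establish the two strict inequalities $\kappa<\psi(x,X)<\chi(x,X)$ separately. For the lower inequality $\kappa<\psi(x,X)$, the argument splits according to whether $\psi(x,X)\le\kappa$. If $\psi(x,X)\le\kappa$, then the quoted fact gives $\chi(x,X)=\psi(x,X)$, and since $x$ is non-isolated this common value is at least $\omega$; but Corollary \ref{co:chi=psi} then makes $x$ discretely touchable, contradicting our assumption. Hence $\psi(x,X)>\kappa$, giving the first strict inequality. For the upper inequality $\psi(x,X)<\chi(x,X)$, I would again invoke Corollary \ref{co:chi=psi}: if instead $\chi(x,X)=\psi(x,X)$ held (they always satisfy $\psi\le\chi$, so equality is the only alternative to strictness), then since this value is $\ge\omega$ the corollary would again force $x$ to be discretely touchable, a contradiction. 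Thus $\psi(x,X)<\chi(x,X)$, completing the displayed chain
\[
\kappa<\psi(x,X)<\chi(x,X).
\]

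Finally I would deduce the "in particular" clause. Having shown $\kappa<\psi(x,X)<\chi(x,X)$, both inequalities are strict between cardinals, so $\psi(x,X)\ge\kappa^+$ and $\chi(x,X)>\psi(x,X)\ge\kappa^+$, whence $\chi(x,X)\ge\kappa^{++}$. This is the purely cardinal-arithmetic step and is immediate once the strict inequalities are in hand.

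The only genuinely nontrivial ingredient is the quoted equality $\chi(x,X)=\psi(x,X)$ for points of pseudo-character $\le\kappa$ in regular initially $\kappa$-compact spaces, which is precisely the generalization of the classical $\omega$-case used in Corollary \ref{co:w_2}; the text explicitly flags it as "well-known" for $\kappa=\omega$ and asserts the analogous statement holds in general. So I expect the main obstacle to be verifying (or locating a reference for) that generalization: the standard proof builds a decreasing $\kappa^+$-indexed family of open neighbourhoods witnessing $\psi(x,X)\le\kappa$ and uses initial $\kappa$-compactness together with regularity to shrink any neighbourhood base down to one of size $\psi(x,X)$, the initial $\kappa$-compactness guaranteeing that fewer-than-$\kappa^+$ open covers behave well enough to prevent the character from exceeding the pseudo-character. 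Once that fact is granted, the remainder is a routine contradiction argument identical in form to the corollary it generalizes.
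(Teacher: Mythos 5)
Your argument is correct and coincides with the paper's own: the authors simply remark that the proof is ``a completely similar argument'' to that of Corollary \ref{co:w_2}, resting on exactly the fact you isolate, namely that $\psi(x,X)\le\kappa$ implies $\chi(x,X)=\psi(x,X)$ in a regular initially $\kappa$-compact space, combined with Corollary \ref{co:chi=psi} to rule out both $\psi(x,X)\le\kappa$ and $\psi(x,X)=\chi(x,X)$. Your fleshed-out version, including the cardinal-arithmetic step giving $\chi(x,X)\ge\kappa^{++}$, is precisely what the paper leaves implicit.
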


Finally, we recall that a space $X$ is called $\kappa$-bounded if
every subset of $X$ of cardinality $\le \kappa$ has compact
closure in $X$. It is obvious that every $\kappa$-bounded space
$X$ is initially $\kappa$-compact, i.e. every open cover of $X$ of
size at most $\kappa$ has a finite subcover, or equivalently:
every infinite subset of $X$ of cardinality at most $\kappa$ has a
complete accumulation point.

\bigskip

\section{Main results}\label{main}

Our main results make essential use of certain {\em strong
colorings} that were introduced and established by the second
author. Therefore we start with defining these colorings.

\begin{definition}
Let $\lambda$ and $\kappa$ be infinite cardinals. We shall denote
by $\Col(\lambda,\kappa)$ the following statement: There is a
coloring $c: [\lambda]^2 \to 2\,$ such that, given any ordinal
$\xi < \kappa^+$ and a map $h : \xi \times \xi \to 2$, for every
family $\{A_\alpha : \alpha < \lambda\}$ of $\lambda$ many
pairwise disjoint subsets of $\lambda$ of order type $\xi$ there
are $\alpha < \beta < \lambda$ for which
$$c(a_{\alpha,i},a_{\beta,j}) = h(i,j)\,$$
holds for all pairs $\langle i,j \rangle \in \xi \times \xi$.
Here, of course, $a_{\alpha,i}$ denotes the $i$th member of
$A_\alpha$ in its increasing ordering (of type $\xi$).
\end{definition}

Thus our relation $\Col(\lambda,\kappa)$ is identical with the
relation $\pr_0(\lambda,\lambda,2,\kappa^+)$ that was defined by
the second author e.g. in \cite{Sh}, Appendix 1, def. 1.1.

We mention that, simply putting together the results given in
4.6C(5) and 4.5(3) from chapter III of \cite{Sh} (the first result
can be found on page 172 and the second on page 170), one obtains
the following fact.

\begin{proposition}\label{pr:book}
For every infinite cardinal $\kappa$ the relation
$\pr(\lambda,\lambda,\lambda,\kappa^+)$ (that is stronger than
$\pr_0(\lambda,\lambda,2,\kappa^+)\, \equiv\,
\Col(\lambda,\kappa)$) holds for the cardinal $$\lambda =
(2^\kappa)^{++} + \omega_4\,.$$
\end{proposition}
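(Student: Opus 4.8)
The statement is in essence a bookkeeping assembly of two results from \cite{Sh}, so the plan is to identify the correct general coloring theorem, verify its hypotheses for this particular $\lambda$, and then translate between the $\pr$-notation of the book and the relation $\Col(\lambda,\kappa)$ of the present paper.

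First I would analyze the arithmetic of $\lambda = (2^\kappa)^{++} + \omega_4$, which I read as $\lambda = \max\{(2^\kappa)^{++},\,\omega_4\}$. In either case $\lambda$ is the successor of a \emph{regular} cardinal: if $\lambda = (2^\kappa)^{++}$ then $\lambda = \mu^+$ with $\mu = (2^\kappa)^+$ regular, while if $\lambda = \omega_4$ then $\lambda = \omega_3^+$ with $\omega_3$ regular. Moreover $\lambda \ge \omega_4$ in both cases, and since $\kpl \le 2^\kappa < \mu < \lambda$ there is ample room for the fourth parameter; note in particular that $\kpl < \mu = \cf(\mu)$. The role of the $+\,\omega_4$ is exactly to push $\lambda$ past the small-cardinal threshold at which the cited theorems may fail, while keeping it a successor of a regular cardinal.

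Next I would invoke III.4.6C(5) of \cite{Sh} to discharge the pcf-type (club-guessing) hypothesis that the main coloring theorem requires: this is the step that exploits ``successor of a regular cardinal'' together with $\lambda \ge \omega_4$. With that hypothesis in hand, III.4.5(3) then supplies the strong coloring and yields $\pr(\lambda,\lambda,\lambda,\kpl)$, once one checks that the fourth slot may be taken as large as $\kpl$, i.e. that $\kpl \le \mu$, which was arranged above. Finally, as recorded in the statement itself, $\pr(\lambda,\lambda,\lambda,\kpl)$ is stronger than $\pr_0(\lambda,\lambda,2,\kpl)$ — one weakens the colour parameter from $\lambda$ down to $2$ and passes down the $\pr$-hierarchy — and the latter relation is literally $\Col(\lambda,\kappa)$ by the remark following the definition.

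The genuinely delicate point here is not any computation but the faithful matching of Shelah's parameter conventions: the precise meaning and the monotonicity direction of each of the four arguments of $\pr$, $\pr_0$, and $\pr_1$; the exact shape of the hypothesis that 4.6C(5) verifies and that 4.5(3) consumes; and the confirmation that sequences of order type $\xi < \kpl$ are admissible, so that $\kpl$ is a legitimate value for the fourth argument. I expect that once this translation is carried out carefully, the conclusion simply drops out by directly quoting the two theorems, and so the proof will consist of little more than the arithmetic observation of the first paragraph together with these two citations.
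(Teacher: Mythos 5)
Your proposal matches the paper exactly: the paper offers no proof beyond the remark that the proposition follows by ``simply putting together'' III.4.6C(5) and III.4.5(3) of Shelah's \emph{Cardinal Arithmetic}, which is precisely the assembly you describe. Your additional observations --- that $\lambda = \max\{(2^\kappa)^{++},\omega_4\}$ is in either case the successor of a regular cardinal, and that $\pr$ weakens to $\pr_0$ and hence to $\Col(\lambda,\kappa)$ --- are correct and only make explicit what the paper leaves to the reader.
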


We note that $(2^\kappa)^{++} < \omega_4$ can only occur if
$\kappa = \omega$ and the continuum hypothesis holds, hence in
every other case we have $\Col((2^\kappa)^{++},\kappa)$.

Motivated by the work on this paper, the second author has
achieved some further improvements on this proposition that will
appear in \cite{Sh2}.

For every coloring $c: [\lambda]^2 \to 2\,$ one can naturally
define a subspace $F[c]$ of the Cantor cube $2^\lambda$ as
follows: $F[c] = \{ c_\alpha : \alpha < \lambda \}\,$ where, for
any $\alpha < \lambda$, the point $c_\alpha \in 2^\lambda$ is
defined by the stipulation

$$c_\alpha(\beta) =\left\{
\begin{array}{lll}
c(\alpha,\beta)& \mbox{ if $\,\,\beta < \alpha\,,$}\\
1 & \mbox{ if $\,\,\beta = \alpha\,,$}\\
0 & \mbox{ if $\,\,\beta > \alpha\,$}.
\end{array}
\right.
$$

\medskip

Here, and in what follows, we commited the innocent abuse of
notation of writing $c(\alpha,\beta)$ instead of
$c(\{\alpha,\beta\})$. The requirement $c_\alpha(\alpha) = 1$ is
purely technical, just to ensure that $\alpha \ne \beta$ implies
$c_\alpha \ne c_\beta$.

We shall need the following lemma in the proof of our main result.

\begin{lemma}\label{lm:dense}
If $\lambda$ is an uncountable regular cardinal and
$\,\Col(\lambda,\kappa)$ holds then there is a coloring $d :
[\lambda]^2 \to 2\,$ establishing $\,\Col(\lambda,\kappa)$ with
the extra property that the set $F[d]$ is dense in the Cantor cube
$2^\lambda$.
\end{lemma}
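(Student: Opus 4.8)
The plan is to start from a coloring $c$ establishing $\Col(\lambda,\kappa)$ and modify it on a small, controlled set of pairs so that $F[d]$ becomes dense, while preserving the strong-coloring property. The key point is that density of $F[d]$ in $2^\lambda$ is a property that only needs to be achieved on a \emph{cofinal} family of basic open sets, and each basic open set is determined by finitely many coordinates; whereas the property $\Col(\lambda,\kappa)$ is a ``tail'' property that is insensitive to modifying $c$ on any set of pairs that is ``small'' in the appropriate sense (for instance, contained in $[S]^2$ for a non-stationary or otherwise negligible $S$). So the strategy is to reserve a thin set of indices $\alpha$ on which we are free to prescribe the values $c_\alpha(\beta)$ for $\beta$ below a fixed finite threshold, and then check that such modifications do not destroy $\Col(\lambda,\kappa)$.

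Concretely, I would proceed as follows. First I would fix an enumeration $\langle \sigma_\eta : \eta < \lambda\rangle$ of all finite partial functions $\sigma$ from $\lambda$ into $2$ (there are exactly $\lambda$ of them since $\lambda$ is regular uncountable, hence $\lambda^{<\omega}=\lambda$ assuming the arithmetic works, or at least $[\lambda]^{<\omega}\cdot 2^{<\omega}=\lambda$); each $\sigma_\eta$ names a basic open subset $[\sigma_\eta]$ of $2^\lambda$. Density of $F[d]$ means that for every $\eta$ there is some $\alpha$ with $c_\alpha\supseteq\sigma_\eta$, i.e. $d_\alpha$ agrees with $\sigma_\eta$ on $\dom(\sigma_\eta)$. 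Next, for each $\eta<\lambda$ I would choose an index $\alpha_\eta<\lambda$, the $\alpha_\eta$ pairwise distinct and chosen large enough that $\alpha_\eta>\max(\dom\sigma_\eta)$, and then \emph{define} $d$ on pairs $\{\alpha_\eta,\beta\}$ with $\beta\in\dom(\sigma_\eta)$ by the requirement $d_{\alpha_\eta}(\beta)=\sigma_\eta(\beta)$, i.e. $d(\alpha_\eta,\beta)=\sigma_\eta(\beta)$; on all other pairs $d$ agrees with $c$. Since $\alpha_\eta>\max(\dom\sigma_\eta)$, each coordinate $\beta\in\dom\sigma_\eta$ satisfies $\beta<\alpha_\eta$, so $d_{\alpha_\eta}(\beta)=d(\alpha_\eta,\beta)$ is indeed the coordinate we are prescribing and the technical values $c_\alpha(\alpha)=1$, $c_\alpha(\beta)=0$ for $\beta>\alpha$ are untouched; hence $d_{\alpha_\eta}\supseteq\sigma_\eta$ and $F[d]$ is dense.

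The main obstacle, and the heart of the argument, is verifying that $d$ still establishes $\Col(\lambda,\kappa)$. Here I would use that the set $M=\{\alpha_\eta:\eta<\lambda\}$ on which $d$ differs from $c$ can be chosen to be ``small'' relative to any admissible family. Given $\xi<\kappa^+$, a map $h:\xi\times\xi\to 2$, and a family $\{A_\alpha:\alpha<\lambda\}$ of $\lambda$ pairwise disjoint sets of order type $\xi$, I would first pass to the $\lambda$-sized subfamily consisting of those $A_\alpha$ that avoid $M$ entirely in a suitable sense, or more precisely whose relevant initial segments avoid the finitely many ``spoiled'' coordinates. The cleanest way is to arrange $M$ to be non-stationary, or to arrange that each $\alpha_\eta$ has been modified only on a \emph{finite} set of pairs below it, so that for all but $<\lambda$ many $A_\alpha$ the colorings $c$ and $d$ agree on $A_\alpha\times A_\beta$ whenever $\alpha,\beta$ are both drawn from the surviving subfamily; since $\Col(\lambda,\kappa)$ for $c$ then supplies the required $\alpha<\beta$ within that subfamily, the same pair works for $d$. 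The delicate bookkeeping is to guarantee that the prescribed modifications genuinely affect only finitely many pairs per index (which they do, since each $\dom\sigma_\eta$ is finite) and that one can thin the family to $\lambda$ many sets on which $c$ and $d$ coincide for the cross-pairs that matter; once this is set up, the reduction to $\Col(\lambda,\kappa)$ for the original $c$ is immediate.
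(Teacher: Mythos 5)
Your construction of $d$ and the verification that $F[d]$ is dense are fine: the prescriptions cannot conflict with one another (if the pair $\{\alpha_\eta,\beta\}$ with $\beta\in\dom(\sigma_\eta)$ coincided with a pair $\{\alpha_{\eta'},\beta'\}$ with $\beta'\in\dom(\sigma_{\eta'})$ and $\eta\ne\eta'$, we would get $\alpha_\eta<\alpha_{\eta'}<\alpha_\eta$), and they do not interfere with the diagonal stipulations. The gap is exactly where you say the heart of the argument lies, namely the preservation of $\Col(\lambda,\kappa)$. The set of modified pairs has cardinality $\lambda$, and whether $c$ and $d$ agree on all cross-pairs between $A_\alpha$ and $A_\beta$ is a property of the \emph{pair} $(\alpha,\beta)$, not of the individual indices. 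Neither of your two suggestions closes this. Making $M=\{\alpha_\eta:\eta<\lambda\}$ non-stationary is irrelevant: a pairwise disjoint family of $\lambda$ sets can live entirely inside any unbounded set, and in any case the danger comes from the finite sets $\dom(\sigma_\eta)$ as much as from $M$. And ``for all but $<\lambda$ many $A_\alpha$'' is the wrong quantifier: for a \emph{fixed} $\alpha$ there can be $\lambda$ many $\beta$ such that some cross-pair between $A_\alpha$ and $A_\beta$ has been modified, because $\lambda$ many of the $\sigma_\eta$ have domains meeting the fixed set $A_\alpha$ and the corresponding $\alpha_\eta$ may be scattered through $\lambda$ many different $A_\beta$. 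So one cannot simply discard fewer than $\lambda$ members of the family.

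The step can be repaired, but it needs a genuine tool that you do not invoke. Put $F_\alpha=\bigcup\{\dom(\sigma_\eta):\alpha_\eta\in A_\alpha\}$; since the $\alpha_\eta$ are distinct, $|F_\alpha|\le|\xi|\le\kappa$, hence $G_\alpha=\{\beta\ne\alpha:F_\alpha\cap A_\beta\ne\emptyset\}$ has size $\le\kappa$ because the $A_\beta$ are pairwise disjoint. A free set $T\in[\lambda]^\lambda$ for the set mapping $\alpha\mapsto G_\alpha$ (Hajnal's free set theorem, using $\kappa<\lambda$, which holds in all intended applications) gives $F_\alpha\cap A_\beta=F_\beta\cap A_\alpha=\emptyset$ for all $\alpha\ne\beta$ in $T$, since freeness is symmetric; on the subfamily indexed by $T$ every cross-pair is unmodified and $\Col(\lambda,\kappa)$ for $c$ finishes the proof. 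As written, however, your proposal asserts the conclusion of this step while the hints offered for proving it do not work, so I count this as a real gap. For comparison, the paper takes an entirely different route that avoids the issue: it never modifies $c$, but uses $\Col(\lambda,\kappa)$ itself to show that $F[c]$ must already be $\lambda$-dense in some tail product $2^{\lambda\setminus\alpha}$, and then takes $d=c^{(\alpha)}$, the translate of $c$ restricted to that tail, which is automatically still a witness.
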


\begin{proof}
Assume that the coloring $c: [\lambda]^2 \to 2\,$ witnesses
$\,\Col(\lambda,\kappa)$. It is obvious that then for each $\alpha
< \lambda$ the coloring $c \upharpoonright [\lambda \setm
\alpha]^2$, i.e. $c$ restricted to the pairs from a tail of
$\lambda$, when "translated" back to $\lambda$ is also a witness
for $\,\Col(\lambda,\kappa)$. This translated coloring
$\,c^{(\alpha)} : [\lambda]^2 \to 2\,$ is naturally defined by the
formula
$$c^{(\alpha)}(\xi,\zeta) = c(\alpha\dotplus\xi,\alpha\dotplus\zeta).$$
Here we use $\dotplus$ to denote ordinal addition.

Next we show that there is an $\alpha < \lambda$ for which $F[c]$
is $\lambda$-dense in the tail product $2^{\lambda \setm \alpha}.$
This means that for every finite function $\,\varepsilon \in
Fn(\lambda \setm \alpha,2)\,$ we have $|[\varepsilon] \cap F[c]| =
\lambda$, where $[\varepsilon] = \{ f \in 2^\lambda : \varepsilon
\subs f \}$ is the elementary open set in the Cantor cube
$2^\lambda$ coded by $\varepsilon$.

Assume, arguing indirectly, that there is no such $\alpha <
\lambda$. We may then define a $\lambda$-sequence
$\langle\varepsilon_\alpha : \alpha < \lambda\rangle$ of members
of $Fn(\lambda,2)$ as follows. Assume that $\alpha < \lambda$ and
$\langle\varepsilon_\beta : \beta < \alpha\rangle$ have already
been defined in such a way that for each $\beta < \alpha$ we have
$|[\varepsilon_\beta] \cap F[c]| < \lambda$. For $\beta < \alpha$
we shall write $E_\beta = \{i : c_i \in [\varepsilon_\beta]\}$,
then $|E_\beta| = |[\varepsilon_\beta] \cap F[c]| < \lambda$.

Since $\lambda$ is regular, we may then find an ordinal
$\nu_\alpha < \lambda$ such that $$\bigcup
\{\dom(\varepsilon_\beta) \cup E_\beta  : \beta < \alpha \} \subs
\nu_\alpha\,.$$ We then choose $\varepsilon_\alpha \in Fn(\lambda
\setm \nu_\alpha)$ so that $|[\varepsilon_\alpha] \cap F[c]| <
\lambda$. This is possible by our indirect assumption.

By $\lambda = \cf(\lambda) > \omega$, after an appropriate
thinning out the sequence $\langle\varepsilon_\alpha : \alpha <
\lambda\rangle$ may be assumed to be such that there are a
positive natural number $n$ and a function $\varepsilon : n \to 2$
for which we have $\varepsilon_\alpha = \varepsilon *
\dom(\varepsilon_\alpha )$ for all $\alpha < \lambda$. The latter
equality means that $|\dom(\varepsilon_\alpha)| = n$ and
$\varepsilon_\alpha(\xi_{\alpha,k}) = \varepsilon(k)$ for each $k
< n$, where, of course, $\xi_{\alpha,k}$ denotes the $k$th element
of $\dom(\varepsilon_\alpha)$ in its increasing order.

Now, let $h : n \times n \to 2$ be any map with the property that
$h(0,k) = \varepsilon(k)$ for all $k < n$. Since $c$ witnesses
$\,\Col(\lambda,\kappa)$, we may then find $\beta < \alpha <
\lambda$ such that for each $k < n$ we have
$$c_{\xi_{\alpha,0}}(\xi_{\beta,k}) = c(\xi_{\alpha,0},\xi_{\beta,k}) = h(0,k) = \varepsilon(k) =
\varepsilon_\beta(\xi_{\beta,k})\,.$$ In other words, this means
that $c_{\xi_{\alpha,0}} \in [\varepsilon_\beta]$, i.e.
$\xi_{\alpha,0} \in E_\beta$ which is a contradiction as $E_\beta
\subs \nu_\alpha$ while $\xi_{\alpha,0} \in
\dom(\varepsilon_\alpha) \subs \lambda \setm \nu_\alpha$.

So fix $\alpha < \lambda$ for which $F[c]$ is $\lambda$-dense in
$2^{\lambda \setm \alpha}.$ We claim that then $F[c^{(\alpha)}]$
is dense, even $\lambda$-dense, in $2^\lambda$. To see this,
consider any $\varepsilon \in Fn(\lambda,2)$ and define
$\widehat{\varepsilon} \in Fn(\lambda \setm \alpha)$ as the
natural translate of $\varepsilon$ by $\alpha$. In other words,
$\dom(\widehat{\varepsilon}) = \{\alpha \dotplus \xi : \xi \in
\dom(\varepsilon)\}$ and $\widehat{\varepsilon}(\alpha \dotplus
\xi) = \varepsilon(\xi)$ for each $\xi \in \dom(\varepsilon)$.

Then $|[\widehat{\varepsilon}] \cap F[c]| = \lambda$, hence the
set $$E = \{\zeta : c_{\alpha \dotplus \zeta} \in
[\widehat{\varepsilon}] \mbox{ and } \dom(\widehat{\varepsilon})
\subs \alpha \dotplus \zeta\}$$ is also of cardinality $\lambda$.
But this means that for every $\zeta \in E$ and $\xi \in
\dom(\varepsilon)$ we have $$\big({c^{(\alpha)}}\big)_\zeta(\xi) =
c_{\alpha \dotplus \zeta}(\alpha \dotplus \xi) =
\widehat{\varepsilon}(\alpha \dotplus \xi) = \varepsilon(\xi)\,,$$
hence $\big({c^{(\alpha)}}\big)_\zeta \in [\varepsilon]$ holds for
each $\zeta \in E$. But this clearly implies $|[\varepsilon] \cap
F[c^{(\alpha)}]| = \lambda$, showing that $d = c^{(\alpha)}$
satisfies the requirements of the lemma.

\end{proof}

We are now ready to state and prove our main result. Before
formulating it, however we recall that the $\kappa$-closure
$cl_\kappa(A)$ of a subset $A$ of a topological space $X$ is
defined by
$$cl_\kappa(A) = \bigcup \{\overline{B} : B \in [A]^{\le\kappa}\},$$
where $\overline{B}$ denotes the closure of $B$ in $X$. Moreover,
for every point $x \in 2^\lambda\,$ its support $\supp(x)$ is
defined by
$$\supp(x) = \{\alpha < \lambda : x(\alpha) = 1\}\,.$$
\begin{theorem}\label{tm:main}
Assume that $\kappa$ is an infinite and $\lambda > \kappa^+$ is a
regular cardinal, moreover the coloring $c: [\lambda]^2 \to 2\,$
witnesses the relation $\,\Col(\lambda,\kappa)$. Let us denote by
$H_\kappa[c]$ the $\kappa$-closure of the set $F[c]$ in the Cantor
cube $2^\lambda$. Then for every right separated subset $S$ of
$H_\kappa[c]$ there is an $\alpha < \lambda$ for which
$$\bigcup \{\supp(x) : x \in S\} \subs \alpha\,.$$
\end{theorem}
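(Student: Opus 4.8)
The plan is to prove the contrapositive: assuming that for every $\alpha < \lambda$ there is a point $x \in S$ with $\supp(x) \not\subs \alpha$, I will construct a configuration inside $S$ that contradicts right separation, using the strong coloring property $\Col(\lambda,\kappa)$ to force many points of $S$ into a single relatively open set.

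\medskip

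First I would fix the setup. Each point $x \in H_\kappa[c]$ lies in the $\kappa$-closure of $F[c]$, so $x$ is the limit of a $\le\kappa$-sized subset of $F[c]$; equivalently $x \in \overline{\{c_\nu : \nu \in B_x\}}$ for some $B_x \in [\lambda]^{\le\kappa}$. The hypothesis $\lambda > \kappa^+$ (together with the regularity of $\lambda$) is what will let me control the supports of these approximating sets. Under the negated conclusion, the supports $\{\supp(x) : x \in S\}$ are unbounded in $\lambda$, so I can select an increasing $\lambda$-sequence of points $\langle x_\alpha : \alpha < \lambda\rangle$ from $S$ whose supports (and their approximating index sets $B_{x_\alpha}$) march off to cofinal heights; by regularity of $\lambda$ I can thin out to get a head/tail structure, arranging that the relevant coordinates of $x_\alpha$ below some ordinal $\xi_\alpha$ stabilize while genuinely new support appears above $\xi_\alpha$. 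This is the delicate bookkeeping step.

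\medskip

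The heart of the argument is to exploit $\Col(\lambda,\kappa)$. Right separation of $S$ means every $x_\alpha$ has a basic open neighbourhood $[\varepsilon_\alpha]$ (coded by a finite $\varepsilon_\alpha \in Fn(\lambda,2)$) meeting $S$ in a set with $x_\alpha$ as its first element in the separating order. After thinning, I may assume the $\varepsilon_\alpha$ form a $\Delta$-system with a fixed pattern, as in the proof of Lemma \ref{lm:dense}. I then package the finitely many constraints coming from one such neighbourhood, together with the stabilized coordinate values, into a target map $h : \xi \times \xi \to 2$ for a suitable $\xi < \kappa^+$, applied to pairwise disjoint index blocks $A_\alpha$ of order type $\xi$ extracted from the $B_{x_\alpha}$. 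The coloring $c$ then produces $\alpha < \beta$ with $c(a_{\alpha,i},a_{\beta,j}) = h(i,j)$ on all of $\xi \times \xi$; reading this back through the defining formula for the $c_\nu$ and the $\kappa$-closure, I obtain that some point of $S$ associated with $x_\beta$ in fact lands in the neighbourhood $[\varepsilon_\alpha]$ of $x_\alpha$, violating the right-separating choice of $[\varepsilon_\alpha]$.

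\medskip

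The main obstacle I anticipate is the translation between the closure structure of $H_\kappa[c]$ and the purely combinatorial coloring statement. Points of $S$ are not members of $F[c]$ but only $\kappa$-limits of such members, so a single coordinate value $x(\beta)$ need not equal any $c(\nu,\beta)$; I must instead capture it as a limiting value over the $\le\kappa$ indices in $B_x$, and the target function $h$ has to encode the full $\xi \times \xi$ pattern that guarantees the limit point itself (not just some approximant) falls into $[\varepsilon_\alpha]$. Ensuring that $\xi$ stays below $\kappa^+$ while faithfully recording enough coordinates to pin down membership in a finite open box is exactly where the bound $\lambda > \kappa^+$ and the strength of $\Col(\lambda,\kappa)$ must be used in tandem, and getting the disjointness and order-type requirements on the blocks $A_\alpha$ to line up with the stabilized coordinate data is the step most likely to require care.
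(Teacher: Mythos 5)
Your overall strategy is exactly the paper's: argue indirectly, extract $\lambda$ many right-separated points $x_\alpha$ with unbounded supports together with finite separating conditions $\varepsilon_\alpha$ and $\le\kappa$-sized approximating index sets, thin to a uniform pattern, encode $\varepsilon$ into a target map $h$ on $\xi\times\xi$ for some $\xi<\kappa^+$, and let $\Col(\lambda,\kappa)$ push a later point into an earlier point's separating neighbourhood. However, the step you describe as ``the approximating index sets $B_{x_\alpha}$ march off to cofinal heights'' is a genuine gap, and regularity of $\lambda$ plus thinning cannot close it: $\Col(\lambda,\kappa)$ requires $\lambda$ many \emph{pairwise disjoint} blocks, and $\lambda$ many sets of size $\le\kappa$ need not admit a disjoint refinement of size $\lambda$ under the mere hypothesis that $\lambda>\kappa^+$ is regular; moreover, simply passing to a subset of $B_{x_\alpha}$ destroys the property $x_\alpha\in\overline{\{c_i: i\in B_{x_\alpha}\}}$ that you need at the end. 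The missing idea is to use the specific structure of $F[c]$: since $c_i(\gamma)=0$ whenever $i<\gamma$, choosing $\gamma\in\supp(x_\alpha)\setminus\alpha$ and intersecting with the clopen set $\{y: y(\gamma)=1\}$ replaces $B_{x_\alpha}$ by $A_\alpha=\{i\in B_{x_\alpha}: c_i(\gamma)=1\}\subseteq\lambda\setminus\alpha$ while preserving $x_\alpha\in\overline{\{c_i: i\in A_\alpha\}}$. Only after this observation can one thin (using $\lambda=\cf(\lambda)>\kappa^+$) to blocks $B_\alpha=a_\alpha\cup A_\alpha$ of a common order type $\xi<\kappa^+$ with $\sup B_\beta<\min B_\alpha$ for $\beta<\alpha$.

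Your other anticipated obstacle, that $h$ must pin down the limit point rather than just an approximant, dissolves once you note that $[\varepsilon_\alpha]$ is clopen: $h$ only has to force every approximant $c_i$ with $i$ in the later block into $[\varepsilon_\alpha]$, which requires constraining $h$ only on the finitely many columns indexed by the positions of $a_\alpha$ inside its block (so the coordinates $\dom(\varepsilon_\alpha)$ must indeed be merged into the blocks), and then $x_\beta\in\overline{\{c_i:i\in A_\beta\}}\subseteq[\varepsilon_\alpha]$ follows for the limit point itself. One further slip: a right-separating neighbourhood of $x_\alpha$ excludes the \emph{later} points of $S$, so $x_\alpha$ is the last, not the first, element of $[\varepsilon_\alpha]\cap S$ in the separating order; the contradiction you ultimately invoke (a later point landing in an earlier point's neighbourhood) is the correct one, but it does not follow from the convention as you literally state it.
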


\begin{proof}
Assume that the statement of the theorem fails. Then we clearly
may find a subset $S = \{x_\alpha : \alpha < \lambda\} \subs
H_\kappa[c]$ that is right separated by the well-ordering given by
the indices $\alpha$, moreover for each $\alpha < \lambda$ we have
$\,\supp(x_\alpha) \setm \alpha \ne \emptyset$. The first part of
this means that for every $\alpha < \lambda$ there is a finite set
$a_\alpha \in [\lambda]^{<\omega}$ such that $\varepsilon_\alpha =
x_\alpha \upharpoonright a_\alpha$ codes an elementary open right
separating neighbourhood of the point $x_\alpha$ in $S$, i.e.
$x_\beta \notin [\varepsilon_\alpha ]$ for all $\beta > \alpha$.
The second part implies that for every $\alpha < \lambda$ there is
a set $A_\alpha \in [\lambda \setm \alpha]^{\kappa}$ such that
$x_\alpha \in \overline{\{ c_i : i \in A_\alpha \}}$.

A standard delta-system and counting argument allows us to thin
out the sequence $\{x_\alpha : \alpha < \lambda\}$ in such a way
that the sets $a_\alpha$ are pairwise disjoint and of the same
size $n$. Moreover, similarly as above in the proof of lemma
\ref{lm:dense}, we may in addition assume that for some
$\varepsilon \in 2^n$ we have $\varepsilon_\alpha = \varepsilon *
a_\alpha$ for each $\alpha < \lambda$. Let us now set $B_\alpha =
a_\alpha \cup A_\alpha$ for $\alpha < \lambda$. After some further
thinning out, using $\lambda = \cf(\lambda) > \kappa^+$, we may
also assume that all the sets $B_\alpha$ have the same order type
$\xi < \kappa^+$, moreover $\sup B_\beta < \min B_\alpha$ whenever
$\beta< \alpha < \lambda$. Finally, we may also assume that there
is some fixed set $a \in [\xi]^n$ so that for each $\alpha <
\lambda$ we have $a_\alpha = \{\zeta_{\alpha,\nu} : \nu \in a \}$,
where $B_\alpha = \{\zeta_{\alpha,\nu} : \nu \in \xi \}$ is the
increasing enumeration of $B_\alpha$.

Now let $h : \xi \times \xi \to 2$ be any map satisfying
$h(\eta,\nu_k) = \varepsilon(k)$ for all $\eta < \xi$ and $k < n$,
where $\nu_k$ is the $k$th member of the set $a \in [\xi]^n$ in
its increasing order. Since the coloring $c$ witnesses the
relation $\,\Col(\lambda,\kappa)$ we may then find $\beta < \alpha
< \lambda$ such that $h(\nu,\mu) =
c(\zeta_{\alpha,\nu},\zeta_{\beta,\mu})$ holds for each pair
$\langle \nu,\mu \rangle \in \xi \times \xi$. But according to our
above arrangements this implies $c_i \in [\varepsilon_\beta]$ for
each $i \in B_\alpha \supset A_\alpha$, consequently $x_\alpha \in
[\varepsilon_\beta]$ as well because $[\varepsilon_\beta]$ is a
closed (in fact clopen) set and $x_\alpha \in \overline{\{ c_i : i
\in A_\alpha \}}$. This, however, is a contradiction because
$[\varepsilon_\beta]$ was assumed to be a right separating
neighbourhood of $x_\beta$ which thus cannot contain the point
$x_\alpha$. This contradiction then completes the proof of theorem
\ref{tm:main}.

\end{proof}

In what follows, let us denote by $\Sigma_\lambda$ the subset of
the the Cantor cube $2^\lambda$ that consists of all points $x \in
2^\lambda$ whose support is bounded in $\lambda$. (Of course, if
$\lambda$ is regular this is equivalent with $|\supp(x)| <
\lambda$.) Using this notation, for every coloring $c: [\lambda]^2
\to 2\,$ we have, by definition, $F[c] \subs \Sigma_\lambda$.
Moreover, if $c$ witnesses the relation $\,\Col(\lambda,\kappa)$
and $\lambda > \kappa^+$ is a regular cardinal then, by theorem
\ref{tm:main}, we even have $\overline{S} \subs \Sigma_\lambda$
whenever $S \subs H_\kappa[c]$ is right separated. Thus we have
arrived at the following result that makes the statement made in
the title of our paper precise.

\begin{corollary}\label{co:init}
If $\lambda > \kappa^+$ is a regular cardinal and
$\,\Col(\lambda,\kappa)$ holds then there is a dense
$\kappa$-bounded subspace of the the Cantor cube $2^\lambda$ that
has a discretely untouchable (non-isolated) point.
\end{corollary}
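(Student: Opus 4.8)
The plan is to take the $\kappa$-closure $H_\kappa[d]$ of a \emph{dense} instance of $F[d]$ and then enlarge it by a single point of unbounded support, this extra point being the desired discretely untouchable one. The reason for going outside $H_\kappa[d]$ is that all of its points have bounded support, whereas Theorem \ref{tm:main} only obstructs discrete sets whose supports are unbounded; so the untouchable point has to be sought just outside the $\kappa$-bounded core.

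First I would invoke Lemma \ref{lm:dense}. Since $\kappa$ is infinite we have $\kappa^+ \ge \omega_1$, so $\lambda > \kappa^+$ is uncountable and regular; hence there is a coloring $d : [\lambda]^2 \to 2$ witnessing $\Col(\lambda,\kappa)$ for which $F[d]$ is dense in $2^\lambda$. Put $X = H_\kappa[d]$, the $\kappa$-closure of $F[d]$. As $F[d] \subs X$ and $F[d]$ is dense, $X$ is dense in $2^\lambda$. Next I would verify that $X$ is $\kappa$-bounded: given $A \in [X]^{\le\kappa}$, each $a \in A$ lies in $\overline{B_a}$ for some $B_a \in [F[d]]^{\le\kappa}$, so with $B = \bigcup_{a \in A} B_a \in [F[d]]^{\le\kappa}$ we get $A \subs \overline{B}$; since $\overline{B}$ is a closed, hence compact, subset of $2^\lambda$ and $\overline{B} \subs cl_\kappa(F[d]) = X$ by the very definition of the $\kappa$-closure, the set $\overline{A} \subs \overline{B}$ is a compact subset of $X$, which is exactly the closure of $A$ in $X$.

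Now I would fix the point $p = \mb{1} \in 2^\lambda$ (identically $1$), whose support is all of $\lambda$, and set $Y = X \cup \{p\}$. Since $F[d] \subs Y$, the space $Y$ is dense. Because every point of $X$ has bounded support — any $B = \{c_i : i \in E\}$ with $E \in [\lambda]^{\le\kappa}$ is indexed by a bounded subset of the regular cardinal $\lambda$, and $c_i(\beta) = 0$ once $\beta > \sup E$, so $\overline{B} \subs \Sigma_\lambda$ — we have $p \notin X$; density of $F[d]$ then shows that $p$ is non-isolated in $Y$. The space $Y$ stays $\kappa$-bounded: for $A \in [Y]^{\le\kappa}$ the closure of $A \setm \{p\}$ in $2^\lambda$ is a compact subset of $X$ by the previous paragraph, and adjoining the single point $p$ keeps $\overline{A \setm \{p\}} \cup \{p\}$ compact and inside $Y$.

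Finally I would prove that $p$ is discretely untouchable in $Y$, which is the crux of the argument and the only place the strong coloring is used. Suppose, to the contrary, that some discrete $D \subs Y$ accumulates to $p$; since $p$ is an accumulation point, $p \notin D$, hence $D \subs X$, and $D$ is discrete in $X$ as well, so it is right separated. Theorem \ref{tm:main} (applied to $d$) then produces an $\alpha < \lambda$ with $\bigcup \{\supp(x) : x \in D\} \subs \alpha$. Thus $D$ is contained in the closed set $\{x \in 2^\lambda : \supp(x) \subs \alpha\}$, and therefore so is its closure; in particular $\supp(p) \subs \alpha$, contradicting that $p$ has unbounded support. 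I expect the main obstacle to be not this last contradiction but the preceding bookkeeping: checking that adding the unbounded-support point preserves $\kappa$-boundedness while destroying touchability, and confirming that discreteness in $Y$ transfers to the right-separatedness needed to feed $D$ into Theorem \ref{tm:main}.
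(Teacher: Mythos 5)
Your proof is correct and follows essentially the same route as the paper: apply Lemma \ref{lm:dense} to get a dense $F[d]$, take the $\kappa$-closure, adjoin one point of unbounded support, and use Theorem \ref{tm:main} to rule out discrete sets accumulating at it. The only cosmetic difference is that you fix the all-ones point while the paper takes an arbitrary point outside $\Sigma_\lambda$, and you spell out in more detail the $\kappa$-boundedness bookkeeping that the paper leaves implicit.
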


\begin{proof}
By lemma \ref{lm:dense} there is a coloring $c: [\lambda]^2 \to
2\,$ witnessing $\,\Col(\lambda,\kappa)$ for which $F[c]$ is dense
in $2^\lambda$. Now pick any point $x \in 2^\lambda \setm
\Sigma_\lambda$, i.e. with $|\supp(x)| = \lambda$ and set $X =
H_\kappa[c] \cup \{x\}$. Then $H_\kappa[c]$ is $\kappa$-bounded
being the $\kappa$-closure of a subset of the compact space
$2^\lambda$, hence so is $X$. Moreover, $x$ is an accumulation
point of $H_\kappa[c]$, as already $F[c] \subs H_\kappa[c]$ is
dense in $2^\lambda$. But by theorem \ref{tm:main} no discrete (or
equivalently: right separated) subset of $H_\kappa[c]$ has $x$ in
its closure.

\end{proof}

For each infinite cardinal $\kappa$, according to proposition
\ref{pr:book} from the beginning of this section, $\,\lambda =
(2^\kappa)^{++} + \omega_4$ satisfies the assumption of corollary
\ref{co:init}. In particular, for $\kappa = \omega$ the smallest
value we get for such a $\lambda$ is $\omega_4$, provided that the
continuum is $\le \omega_2$.

Note that in corollary \ref{co:init} the character $\chi(x,X)$ of
the discretely untouchable point $x \in X$ is $\lambda$. On the
other hand, proposition \ref{pr:init} yields the lower bound
$\kappa^{++}$ for the character of a discretely untouchable
non-isolated point in an initially $\kappa$-compact space. So it
is natural to raise the question if the value for the character of
such a point could be lower than $(2^\kappa)^{++} + \omega_4$. The
following problem seems to be the most intriguing.

\begin{problem}\label{pm:w_2}
Is it consistent with (or even provable from) ZFC that there is a
discretely untouchable non-isolated point of character $\omega_2$
(or $\omega_3$) in some countably compact (or $\omega$-bounded)
regular space?
\end{problem}

It is standard to show that the cardinality of the
$\kappa$-bounded space $X$ given in corollary \ref{co:init} is
$|H_\kappa[c]| = |cl_\kappa \big(F([c]\big)| = \lambda^\kappa
\cdot 2^{2^ \kappa}$. However, if instead of $\kappa$-bounded we
only want an initially $\kappa$-compact example, then this value
may be chosen to be just $\lambda^\kappa$. Indeed, this can be
achieved by constructing a subspace $Y \subs X$ that includes
$F[c] \cup \{x\}$ and has the property that every infinite set $A
\in [Y]^{\le \kappa}$ has a complete accumulation point in $Y$.

In particular, for $\kappa = \omega$ this yields us a countably
compact regular space with a discretely untouchable non-isolated
point of cardinality $\omega_4$, provided that the continuum is
$\le \omega_2$. Again, it is an intriguing problem if the
cardinality of such an example can be, consistently, lowered. Let
us note that, since every non-isolated point in a scattered space
is discretely touchable, such an example cannot be scattered and
hence must be of cardinality at least continuum.

\begin{problem}\label{pm:w_1}
Is it consistent with ZFC that there is a countably compact (or
$\omega$-bounded) regular space of cardinality $\omega_1$ (or
$\omega_2$, or $\omega_3$) with a discretely untouchable
non-isolated point?
\end{problem}

\bigskip

\section{Countable examples}\label{ctbl}

We have shown in the previous section that for every cardinal
$\kappa$ there is a $\kappa$-bounded, and hence initially
$\kappa$-compact, regular space with a discretely untouchable
non-isolated point. In the introduction we also promised to
exhibit such points in Lindelöf regular spaces to conclude that
basically no weakening of compactness suffices to preserve the
property of compact Hausdorff spaces that was our starting point.

In fact, we would like to point out that there are even countable,
hence hereditarily Lindelöf regular spaces that are crowded, i.e.
have no isolated points, in which all discrete subsets are closed,
hence all points are discretely untouchable. Perhaps the first
such example, a countable maximal space that is regular, was
constructed by E. van Douwen; his example was published in the
postumus paper \cite{vD}. A very different such example is the
countable submaximal dense subspace of the Cantor cube
$2^{2^\omega}$ that was constructed in theorem 4.1 of \cite{JSSz}.

Both of these examples are rather non-trivial, so we decided to
include in this paper the following result which shows that
actually every crowded irresolvable countable regular space
contains such an open subspace.

We recall that a space is called irresolvable if it has no two
disjoint dense subsets and that there is a crowded, countable, and
regular irresolvable space. The existence of such a space was
first established by E. Hewitt in 1943, in his classical paper
\cite{H} on resolvability.

\begin{proposition}\label{pr:w}
Every crowded irresolvable regular space has an open subspace in
which all countable discrete subsets are closed.
\end{proposition}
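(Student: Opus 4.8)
The plan is to localize the obstruction to a single set of points and reduce the statement to a resolvability fact. Call a point $x \in X$ \emph{$\omega$-accessible} if it is an accumulation point of some countable discrete subset of $X$, and let $R$ denote the set of all $\omega$-accessible points. The decisive first observation is that, for a nonempty open $U \subseteq X$, \emph{every countable discrete subset of $U$ is closed in $U$ if and only if $U \cap R = \emptyset$}. The nontrivial direction uses that $U$ is open: if $y \in U \cap R$ is witnessed by a countable discrete $D$ clustering at $y$, then $D \cap U$ is again countable and discrete, and since every $U$-neighbourhood of $y$ is open in $X$ it still meets $D \cap U$ in an infinite set, so $D \cap U$ is a countable discrete subset of $U$ which is not closed in $U$; the converse is immediate. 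An open set disjoint from $R$ lies in $\intt(X \setminus R) = X \setminus \overline{R}$, so a (crowded) nonempty open subspace with the desired property exists precisely when $R$ is \emph{not} dense. Thus it suffices to prove that irresolvability forces $R$ to be non-dense, equivalently: if $X$ is crowded, regular and $R$ is dense, then $X$ is resolvable.

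Next I would pass to a more convenient open subspace. Recall that an irresolvable space contains a nonempty open \emph{strongly irresolvable} (SI) subspace, one no nonempty open subset of which is resolvable: taking a maximal disjoint family $\mathcal{R}$ of resolvable open sets, if $\bigcup \mathcal{R}$ were dense one could glue one half of a resolution of each member to resolve $X$, so by maximality some nonempty open $U_0$ contains no resolvable open subset. Since being SI, crowded and regular are all inherited by open subspaces, I may assume $X$ itself is SI. In an SI space every dense set has dense interior (otherwise, given dense $G$ and open $W$ with $\intt G \cap W = \emptyset$, the sets $G \cap W$ and $W \setminus G$ would be disjoint dense subsets of $W$). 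Hence if $R$ is dense then $\intt R$ is a nonempty open set all of whose points are $\omega$-accessible, and I may pass to it. This reduces everything to the core implication: \emph{a crowded regular space in which every point is an accumulation point of a countable discrete set is resolvable} (which contradicts the strong irresolvability we have arranged).

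To attack this core implication I would, for each point $p$, fix a countable discrete $D_p = \{d^p_n : n < \omega\}$ clustering at $p$ and, using regularity together with the fact that each $d^p_n$ is isolated in $\overline{D_p}$, expand $D_p$ to a pairwise disjoint family of open ``cells'' $\{V^p_n : n < \omega\}$ whose closures miss $p$ and meet $D_p$ in single points. These fans provide, arbitrarily close to every point, infinitely many pairwise separated open cells from which to feed two prospective dense sets $A$ and $X \setminus A$; one would then build the resolution by a transfinite recursion along a well-ordering of a $\pi$-base, at each stage drawing a fresh pair of points from the relevant fan so that both $A$ and its complement eventually meet every basic open set.

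The main obstacle is exactly this construction, and it is where both regularity and the countability of the witnessing discrete sets must be used in earnest. The difficulty is that each $D_p$ is \emph{nowhere dense}, so the two dense sets cannot be assembled from the discrete sets themselves (a disjoint union of irresolvable pieces is again irresolvable, so no naive splitting works); the fans can only be exploited to guarantee enough branching to keep \emph{both} halves dense through the recursion. The delicate bookkeeping — ensuring that no nonempty open set ever has all of its available cells committed to a single side, so that density of both pieces is preserved simultaneously — is the crux of the argument, and it is this that finally contradicts strong irresolvability and forces $R$ to be non-dense.
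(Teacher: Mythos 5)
Your reductions are sound and essentially coincide with the paper's: you pass to an open subspace in which no nonempty open subset is resolvable (the paper's hereditarily irresolvable open subspace), observe that if the set $R$ of accumulation points of countable discrete sets were dense there then irresolvability would force $R$ to have nonempty interior, and thereby reduce everything to the implication that a crowded regular space in which \emph{every} point is an accumulation point of a countable discrete set must be resolvable. Up to that point the argument is correct, including the localization showing that $D \cap U$ still witnesses $\omega$-accessibility inside an open $U$.

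The genuine gap is that this core implication is never proved. Your last two paragraphs describe fans of separated open cells and a transfinite recursion along a $\pi$-base, but then explicitly identify the bookkeeping needed to keep both halves dense as ``the crux'' and ``the main obstacle'' without carrying it out; as you yourself note, the naive recursion fails because one must protect each chosen point from all \emph{future} commitments of the opposite side, and the witnessing discrete sets are nowhere dense, so no direct splitting of them works. This implication is exactly the nontrivial content of the proposition: the paper obtains it by quoting a known theorem (Theorem 2.1 of Sharma--Sharma, or Theorem 1.3 of Juh\'asz--Soukup--Szentmikl\'ossy) asserting that a space in which every point is an accumulation point of a \emph{strongly} discrete set is $\omega$-resolvable, with regularity used only to upgrade countable discrete sets to strongly discrete ones. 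To complete your proof you would either need to cite that theorem or supply the recursion in full, and the latter is a genuinely delicate argument rather than a routine back-and-forth; as written, the proposal does not establish the proposition.
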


\begin{proof}
Let $X$ be a crowded irresolvable regular space. It is well-known
that every irresolvable space has an open subspace that is
hereditarily irresolvable, so let $Y$ be such an open subspace of
$X$. It suffices to show that the set $Z$ of all accumulation
points of countable discrete sets cannot be dense in $Y$ because
then the interior of $Y \setm Z$ is the required open set.

Assume, on the contrary that $Z$ is dense in $Y$. Then $Y \setm Z$
cannot be dense in $Y$ because $Y$ is irresolvable. This means
that $U = Int(Z) \ne \emptyset$. But then, by definition, every
point of $U$ is the accumulation point of a countable discrete
subset of U. Now, every countable discrete set in a regular space
is strongly discrete, i.e. its points can be separated by pairwise
disjoint open sets. But by theorem 2.1 of \cite{Sh-Sh}, (see also
theorem 1.3 of \cite{JSSz2}), then $U$ is even
$\omega$-resolvable, i.e. it has infinitely many pairwise disjoint
dense subsets, which contradicts the choice of $Y$.

\end{proof}


\begin{thebibliography}{99}

\bibitem{vD}
E. van Douwen, Applications of maximal topologies, Topology Appl.
51 (1993), no. 2, pp. 125--139.

\smallskip

\bibitem{H} E. Hewitt,  A problem of set theoretic topology,
Duke Math. J. 10 (1943) 309-333.

\smallskip

\bibitem{JSSz}
I.~Juh{\'a}sz, L.~Soukup and Z.~Szentmikl{\'o}ssy,  $\mathcal
D$-forced spaces: a new approach to resolvability, Topology Appl.,
153 (2006), no. 11, pp. ~1800--1824.

\smallskip

\bibitem{JSSz2}
I.~Juh{\'a}sz, L.~Soukup and Z.~Szentmikl{\'o}ssy, Resolvability
and monotone normality, Israel J. Math., 166 (2008), no. 1, pp.
1--16

\smallskip

\bibitem{Sh-Sh}
P. L. ~Sharma and S. ~Sharma, Resolution property in generalized
k-spaces, Topology Appl., 29 (1988), no. 1, pp. 61-66.

\smallskip

\bibitem{Sh}
S. Shelah, Cardinal Arithmetic, Clarendon Press, Oxford, 1994

\smallskip

\bibitem{Sh2}
S. Shelah, The coloring theorems revisited, [Sh:1027] in
preparation

\end{thebibliography}
\end{document}